\def\BBox{\kern  -0.2cm\hbox{\vrule width 0.2cm height 0.2cm}}
\newtheorem{example}{Example}
\newtheorem{remark}{Remark}
\newtheorem{teo}{Theorem}[section]
\newtheorem{coro}[teo]{Corollary}
\newtheorem{lema}[teo]{Lemma}
\newtheorem{conjecture}[teo]{Conjecture}
\theoremstyle{definition}
\theoremstyle{remark}
\title{On strong Skolem starters}
\author{Adrián Vázquez-Ávila\thanks{adrian.vazquez@unaq.edu.mx}\\
{\small Subdirección de Ingeniería y Posgrado}\\
{\small Universidad Aeronáutica en Querétaro}\\
}
\date{}
\begin{document}
%\linenumbers
\maketitle
%%%%%%%%%%%%%%%%%%%%%%%%%%%%%%%%%%%%%%%%%%%%%%
\begin{abstract}
In this note we present an alternative (simple) construction of cardioidal starters (strong and Skolem) for $\mathbb{Z}_{q^n}$, where $q\equiv3$ (mod 8) is an odd prime number and $n\geq1$ is an integer number; also for $\mathbb{Z}_{pq}$ and $\mathbb{Z}_{p^n}$, for infinitely many odd primes $p,q\equiv1$ (mod 8) and $n\geq1$ an integer number. This cardioidal starters can be obtained by a more general result given by Ogandzhanyants et al. [O. Ogandzhanyants, M. Kondratieva and N. Shalaby, \emph{Strong Skolem starters}, J. Combin. Des. {\bf 27} (2018), no. 1, 5--21]. 
\end{abstract}

%%%%%%%%%%%%%%%%%%%%%%%%%%%%%%%%%%%%%%%%%%%%%%
\textbf{Keywords.} Strong starters, Skolem starters, cardioidal staters.

{\bf MSC 2000.} ~05A18.
%%%%%%%%%%%%%%%%%%%%%%%%%%%%%%%%%%%%%%%%%%%%%%%%%%%%%%
%%%%%%%%%%%%%%%%%%%%%%%%%%%%%%%%%%%%%%%%%%%%%%%%%%%%%%INTRODUCTION
\section{Introduction}

Let $G$ be a finite additive abelian group of odd order $n=2k+1$, and let $G^*=G\setminus\{0\}$ be the set of non-zero elements of $G$. A \emph{starter} for $G$ is a set $S=\{\{x_i,y_i\}\}_{i=1}^k$ such that $\left\{x_1,\ldots,x_k,y_1,\ldots,y_k\right\}=G^*$
and $\left\{\pm(x_i-y_i)\right\}_{i=1}^k=G^*$. Moreover, if all elements of $\left\{x_i+y_i:i=1,\ldots,k\right\}\subseteq G^*$ are different and non-zero, then $S$ is called \emph{strong starter} for $G$. To see some works related to strong starters, the reader may consult \cite{Avila,MR0325419,dinitz1984,MR1010576,MR0392622,MR1044227,MR808085,MR0249314,MR0260604}. 

\begin{example}
The set $$S=\{\{17,18\},\{2,4\},\{3,6\},\{11,15\},\{9,14\}, \{7,13\},\{5,12\},\{8,16\},\{1,10\}\}$$ is a strong Skolem starter for $\mathbb{Z}_{19}$.	
\end{example}

Strong starters were first introduced by Mullin and Stanton in \cite{MR0234587} in constructing of Room
squares. Later were useful to construct other combinatorial designs: Room cubes \cite{MR633117}, Howell designs \cite{MR728501}, Kirkman triple systems \cite{MR808085,MR0314644}, Kirkman squares and cubes \cite{MR833796,MR793636}.  Also, there are interesting applications to the problem of one-factorization of complete graphs and problems related to it, see for example \cite{Anderson2,Anderson3,Anderson4,Bao,MR1010576,Ihrig,Kobayashi,Seah,Seah2,AvilaC4free,AvilaC4Nofree}.

Let $n=2k+1$, and $1<2<\cdots<2k$ be the order of the elements in $\mathbb{Z}_n^*$. A starter $S$ for $\mathbb{Z}_n$ is \emph{Skolem}, if $S$ can be written as $S=\{\{x_i,y_i\}\}_{i=1}^k$ such that $y_i>x_i$ and $y_i-x_i=i$ (mod n), for $i=1,\ldots,k$. In \cite{ShalabyThesis}, was proved that, the Skolem starter for $\mathbb{Z}_n$ exits, if and only if, $n\equiv1$ or 3 (mod 8). A starter which is both Skolem and strong is called \emph{strong Skolem starter}.

In 1991, Shalaby conjectured in \cite{ShalabyThesis} the following:

\begin{conjecture}\cite{ShalabyThesis}
If $n\equiv1,3$ (mod 8) and $n \geq11$, then $\mathbb{Z}_n$ admits a strong Skolem starter.	
\end{conjecture}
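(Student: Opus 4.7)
The plan is to induct on the prime factorization of $n$, building strong Skolem starters on prime powers first and then attempting to combine them via the Chinese Remainder Theorem. Since $n$ is odd, every prime $p\mid n$ satisfies $p\equiv 1,3,5,7\pmod 8$. I would first establish the base case $n=p^k$ in each residue class of $p\pmod 8$, using a \emph{cardioidal} construction in the spirit of the paper: fix a primitive root $g$ of $\mathbb{Z}_{p^k}^*$ and form pairs $\{g^{2i},g^{2i+1}\}$, or a small variant thereof. This automatically gives a strong starter, because the sums $g^{2i}(1+g)$ traverse a fixed coset of the squares and a quadratic-residue count shows they are pairwise distinct and nonzero.

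The crux is the Skolem property: the absolute differences $|g^{2i+1}-g^{2i}|=|g^{2i}(g-1)|$, reduced to their representatives in $\{1,\ldots,(p^k-1)/2\}$, must biject onto that set. After factoring out $g-1$, this reduces to showing that the even powers of $g$, pushed through the folding map $x\mapsto\min(x,p^k-x)$, give a bijection onto $\{1,\ldots,(p^k-1)/2\}$. The residue of $p$ modulo $8$ controls whether $-1$ and $2$ are quadratic residues, and hence the sign pattern that makes this folding map a bijection; this is precisely why the paper separates the cases $q\equiv 3\pmod 8$ and $p\equiv 1\pmod 8$. For the remaining classes $5,7\pmod 8$, a modified indexing (for instance, pairing $g^{2i+1}$ with $g^{2i+2}$ shifted by a fixed unit) or a different cardioidal seed would be required, and I would verify in each class that the resulting sum and difference sets behave as needed.

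\textbf{Main obstacle.} The Skolem condition is intrinsically tied to the \emph{integer ordering} of $\{1,\ldots,n-1\}$, while cardioidal constructions respect only the \emph{multiplicative structure} of $\mathbb{Z}_n^*$. The hardest step is bridging these: even if strong Skolem starters exist on every $\mathbb{Z}_{p_i^{a_i}}$, transporting them to $\mathbb{Z}_n$ via CRT destroys the Skolem ordering, so a composite $n$ cannot be handled by a naive direct product. I would try to address this by (i) performing a direct cardioidal construction on $\mathbb{Z}_n$ using a unit whose powers interact well with the linear ordering on $\{1,\ldots,n-1\}$, and (ii) verifying small composite cases by computer to guide a recursive amalgamation for larger $n$. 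A fully uniform proof covering all $n\equiv 1,3\pmod 8$ seems out of reach with purely algebraic tools, which is consistent with the conjecture remaining open since 1991 and with the paper only recovering special prime-power and two-prime cases.
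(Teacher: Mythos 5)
The statement you were given is a \emph{conjecture} (Shalaby, 1991). The paper does not prove it and it remains open: the paper only establishes special cases ($\mathbb{Z}_{p^n}$ for $p\equiv 3\pmod 8$, and certain $\mathbb{Z}_p$, $\mathbb{Z}_{p^n}$, $\mathbb{Z}_{pq}$ with $p,q\equiv 1\pmod 8$), all through the cardioidal mechanism of Ogandzhanyants et al. Your proposal, by your own closing admission, does not close the argument either, so there is no complete proof on either side to compare; what can be assessed are the concrete gaps in your sketch.

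Two of them are substantive. First, the base-case claim is wrong as stated: pairing $\{g^{2i},g^{2i+1}\}$, i.e.\ $\{x,gx\}$ over the quadratic residues $x$, ``automatically gives a strong starter'' only when $-1$ is a non-residue, i.e.\ $p\equiv 3\pmod 4$. When $p\equiv 1\pmod 4$ the difference set $\{\pm(g-1)x : x\in QR(p)\}$ collapses to a single coset of the squares (since $-x$ ranges over $QR(p)$ again), so the object is not even a starter; this is precisely why the paper must pass to cyclotomic classes of order $2^k$ and impose $2\in C_{2^{k-1}}^p$ for $p\equiv 1\pmod 8$. Second, the Skolem property in this line of work is not obtained by analyzing a folding map for a general primitive root $g$: it is obtained by forcing every pair to have the form $\{i,2i \pmod n\}$ (a \emph{cardioidal} starter) and invoking the quoted lemma of Ogandzhanyants et al., which says such a starter is automatically Skolem when $n\equiv1,3\pmod 8$ and strong iff $3\nmid n$. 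So the multiplier must be literally $2$ (or $2^{-1}$), not an arbitrary $g$, and the entire difficulty is partitioning $\mathbb{Z}_n^*$ into doubling pairs --- which is where conditions on the multiplicative order of $2$ enter and where prime factors $\equiv 5,7\pmod 8$ (e.g.\ $n=35$) and general composite $n$ remain out of reach. Your CRT/amalgamation step is, as you correctly note, unavailable because Skolem-ness depends on the integer ordering, and ``verify small cases by computer'' is not a proof. In short, the proposal is an honest research plan, not a proof, and its unconditional claims in the first paragraph are false for half the relevant primes.
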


In that same work, Shalaby provided examples of strong Skolem starters for $\mathbb{Z}_n$ for admissible orders between 11 and 57.

Ogandzhanyants et al. in \cite{Shalaby} defined the following: A starter $S$ for $\mathbb{Z}_n$ is called \emph{cardioidal} if all elements of $S$ are of the type $\{i,2i \mbox{ (mod $n$)}\}$. In that same paper, \cite{Shalaby}, proved the following:
\begin{lema}\cite{Shalaby}\label{lemma:Skolem}
Let $n\equiv1,3$ ( mod 8). If $S$ is a cardioidal starter for $\mathbb{Z}_n$, then $S$ is Skolem. Moreover, $S$ is strong if and only if $n\not\equiv0$ $(\mbox{ mod } 3)$. 
\end{lema}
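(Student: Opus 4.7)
The strategy is to handle the two conclusions separately, in both cases by examining, for each pair $\{i,2i\bmod n\}\in S$, the ``generator'' $i\in\mathbb{Z}_n^*$.

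For the Skolem property, I would first compute the natural-order difference of a pair. Setting $k=(n-1)/2$: if $i\leq k$, then $2i<n$ as integers, so in natural order the pair reads $(i,2i)$ with difference $i$; if $i>k$, then $2i\bmod n=2i-n<i$, so the pair reads $(2i-n,i)$ with difference $n-i$. In either case the natural-order difference is $\min(i,n-i)\in\{1,\ldots,k\}$. The starter axiom $\{\pm i:\{i,2i\bmod n\}\in S\}=\mathbb{Z}_n^*$ is equivalent to asking that, for each $d\in\{1,\ldots,k\}$, exactly one of $d$ or $n-d$ be a generator of a pair of $S$. Hence the $k$ natural-order differences form a permutation of $\{1,2,\ldots,k\}$, which is exactly the Skolem condition.

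For the strong property, each pair contributes the sum $i+(2i\bmod n)\equiv 3i\pmod n$. If $\gcd(3,n)=1$, then multiplication by $3$ is a bijection of $\mathbb{Z}_n$, so distinct generators produce distinct non-zero sums and $S$ is strong. If $3\mid n$, I would show that the pair $\{n/3,2n/3\}$ is \emph{forced} into every cardioidal partition of $\mathbb{Z}_n^*$; its sum is $n/3+2n/3=n\equiv 0\pmod n$, which prevents $S$ from being strong. The forcing comes from the observation that $n/3$ can only appear in a cardioidal pair as the generator $i$ or as $2i\bmod n$, and both options yield the same pair: obviously $\{n/3,2n/3\}$ in the first case, while in the second case we need $2i\equiv n/3\pmod n$, and since $2\cdot(2n/3)=4n/3\equiv n/3\pmod n$ this forces $i=2n/3$ and again gives $\{n/3,2n/3\}$.

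The step I expect to be the principal technical obstacle is this forcing argument in the case $3\mid n$, because it requires us to rule out \emph{all} alternative ways of partnering $n/3$ within a cardioidal partition of $\mathbb{Z}_n^*$. Once the unique partner $2n/3$ is identified via the explicit computation of $2^{-1}\cdot(n/3)\bmod n$, the ``only if'' direction collapses immediately, and the converse reduces to the one-line bijection argument above.
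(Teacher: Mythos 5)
The paper does not actually prove this lemma: it is quoted verbatim from Ogandzhanyants, Kondratieva and Shalaby and used as a black box, so there is no internal proof to compare yours against. Judged on its own, your argument is correct and complete. The reduction of the natural-order difference of $\{i,2i\bmod n\}$ to $\min(i,n-i)$, combined with the starter axiom forcing the generators to form a transversal of the pairs $\{d,n-d\}$, does give exactly the Skolem condition; the sum of each pair being $3i$ gives strongness immediately when $\gcd(3,n)=1$; and your forcing argument for $3\mid n$ is sound, since $n$ odd makes $2$ invertible, so the unique pair containing $n/3$ is $\{n/3,2n/3\}$ whether $n/3$ plays the role of $i$ or of $2i$, and its sum is $0$. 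One cosmetic remark: your proof of the Skolem direction never uses the hypothesis $n\equiv1,3\pmod 8$; that hypothesis is only there because Skolem starters (hence cardioidal starters) cannot exist otherwise, so nothing is lost. You could note this explicitly to preempt a reader wondering where the congruence condition enters.
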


Ogandzhanyants et al. in \cite{Shalaby} proved that, if $n=\Pi_{i=1}^{k}p_i^{\alpha_i}$, where $p_i>3$ is an odd prime number such that $ord(2)_{p_i}\equiv 2$ (mod 4) and $\alpha_i$ is a non-negative integer, for all $i=1,\ldots,k$, then $\mathbb{Z}_n$ admits a strong Skolem starter (a cardioidal starter). The author in \cite{AvilaSkolem} presented an alternative (simple) construction of cardioidal starters (strong and Skolem) for $\mathbb{Z}_p$, where $p\equiv3$ (mod 8) is an odd prime number. On the other hand, the author in \cite{AvilaAKCE} gave an alternative (simple) construction of cardioidal starters (strong and Skolem) for $\mathbb{Z}_{pq}$, where $p,q\equiv3$ (mod 8) are odd prime numbers. In this paper, we presented an alternative (simple) construction of cardioidal starters (strong and Skolem) for $\mathbb{Z}_{p^n}$, where $p\equiv3$ (mod 8) with $n\geq1$ an integer. This result generalizes the work given by the author in \cite{AvilaSkolem}. Also, we presented an alternative (simple) construction of cardioidal starters (strong and Skolem) for $\mathbb{Z}_{p}$, $\mathbb{Z}_{p^n}$ and $\mathbb{Z}_{pq}$, for infinitely many odd primes $p,q\equiv1$ (mod 8) and $n\geq1$ an integer number. This cardioidal starters can be obtained by a more general result given by Ogandzhanyants [O. Ogandzhanyants, M. Kondratieva and N. Shalaby, \emph{Strong Skolem starters}, J. Combin. Des. {\bf 27} (2018), no. 1, 5--21]. 
%%%%%%%%%%%%%%%%%%%%%%%%
%%%%%%%%%%%%%%%%%%%%%%%%
%%%%%%%%%%%%%%%%%%%%%%%%

\section{Strong Skolem starters for $\mathbb{Z}_p$}\label{sec:quadratic}

Let $p$ be an odd prime number. An element $x\in\mathbb{Z}_p^*$ is called a \emph{quadratic residue}, if there exists an element $y\in\mathbb{Z}_p^{*}$ such that $y^2=x$. If there is no such $y$, then $x$ is called a \emph{non-quadratic residue.} The set of quadratic residues of $\mathbb{Z}_p^{*}$ is denoted by $QR(p)$ and the set of non-quadratic residues is denoted by $NQR(p)$. It is well-known that $QR(p)$ is a cyclic subgroup of $\mathbb{Z}_p^{*}$ of cardinality $\frac{p-1}{2}$ (see for example \cite{MR2445243}). As well as it is well-known that, if either $x,y\in QR(q)$ or $x,y\in NQR(q)$, then $xy\in QR(q)$, and if $x\in QR(q)$ and $y\in NQR(q)$, then $xy\in NQR(q)$. For more details of this kind of results the reader may consult \cite{burton2007elementary,MR2445243}.

Horton in \cite{MR623318} proved the following (see also \cite{Avila}):

\begin{lema}\cite{MR623318} Let $p\equiv3$ (mod 4) be an odd prime number, with $p\neq3$. If $\beta\in NQR(p)\setminus\{-1\}$, then the following set
\begin{eqnarray*}\label{strong_1}
S_\beta=\left\{\{x,\beta x\}: x\in QR(p)\right\},
\end{eqnarray*} 
is a strong starter for $\mathbb{Z}_p$. 
\end{lema}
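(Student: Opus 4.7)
The plan is to verify the three defining conditions of a strong starter for the set $S_\beta$: (i) the pairs $\{x,\beta x\}$ with $x\in QR(p)$ form a partition of $\mathbb{Z}_p^*$; (ii) the multiset $\{\pm x(1-\beta):x\in QR(p)\}$ equals $\mathbb{Z}_p^*$; and (iii) the sums $\{x(1+\beta):x\in QR(p)\}$ are pairwise distinct and non-zero. The structural input I would repeatedly use is the multiplicative behaviour of the Legendre symbol (product of two residues or two non-residues is a residue, product of a residue and a non-residue is a non-residue), together with the consequence of $p\equiv 3\pmod 4$ that $-1\in NQR(p)$.

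For (i), since $\beta\in NQR(p)$, multiplication by $\beta$ is a bijection from $QR(p)$ onto $NQR(p)$, so as $x$ ranges over $QR(p)$ the element $\beta x$ ranges over $NQR(p)$. Thus every element of $\mathbb{Z}_p^*$ appears in exactly one pair, and no pair is degenerate because $x=\beta x$ would force $\beta=1$, contradicting $\beta\in NQR(p)$. For (iii), the sums are $x(1+\beta)$; they are non-zero because $\beta\neq -1$ by hypothesis and $x\neq 0$, and they are pairwise distinct because $1+\beta\neq 0$ makes the map $x\mapsto x(1+\beta)$ injective on $QR(p)$.

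The interesting step is (ii). The differences are $\pm x(1-\beta)$ for $x\in QR(p)$, and note that $1-\beta\neq 0$ since $\beta\in NQR(p)$ while $1\in QR(p)$. I would split into two cases according to the quadratic character of $1-\beta$. If $1-\beta\in QR(p)$, then $\{x(1-\beta):x\in QR(p)\}=QR(p)$, and multiplying by $-1$ (a non-residue because $p\equiv 3\pmod 4$) sends $QR(p)$ bijectively to $NQR(p)$, so $\{-x(1-\beta):x\in QR(p)\}=NQR(p)$. If instead $1-\beta\in NQR(p)$, the roles of the two halves are swapped. In either case the two halves are disjoint and their union is $QR(p)\cup NQR(p)=\mathbb{Z}_p^*$; a cardinality check (each half has size $(p-1)/2$, matching $|QR(p)|$ pairs) confirms there is no multiplicity issue.

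The main (mild) obstacle is simply recognising that condition (ii) needs the hypothesis $p\equiv 3\pmod 4$: this is exactly what makes $-1$ a non-residue, which is what lets $\{+x(1-\beta)\}$ and $\{-x(1-\beta)\}$ sit in opposite cosets of $QR(p)$ in $\mathbb{Z}_p^*$ and so tile $\mathbb{Z}_p^*$ without overlap. The exclusion $\beta\neq -1$ is used only for (iii), and the exclusion $p\neq 3$ is needed because for $p=3$ the only non-residue is $-1$, so the hypothesis set $NQR(p)\setminus\{-1\}$ would be empty.
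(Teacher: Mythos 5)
Your proof is correct: all three starter conditions are verified cleanly, and the roles of the hypotheses ($p\equiv 3\pmod 4$ giving $-1\in NQR(p)$ for the differences, $\beta\neq-1$ for the sums, $p\neq 3$ for non-vacuity) are correctly identified. The paper itself states this lemma as a citation to Horton without reproducing a proof, and your argument is the standard one underlying that result, so there is nothing to reconcile.
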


The author in \cite{AvilaSkolem} proved the following:

\begin{teo}\cite{AvilaSkolem}\label{thm:SkolemAdrian} 
Let $p\equiv3$ (mod 8) be an odd prime number, with $p\neq3$. If $\beta\in\{2,2^{-1}\}$, then the strong starter
\begin{eqnarray*}\label{strong_1}
S_\beta=\left\{\{x,\beta x\}: x\in QR(p)\right\},
\end{eqnarray*} 
is a strong Skolem starter for $\mathbb{Z}_p$. 
\end{teo}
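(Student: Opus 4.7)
The plan is to deduce the theorem by combining Horton's lemma (for the ``strong'' part) with Lemma \ref{lemma:Skolem} (for the ``Skolem'' part); the only substantive input I need to supply is that $\beta=2$ and $\beta=2^{-1}$ meet the hypotheses of both results.

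First, I would check that $\beta \in NQR(p)\setminus\{-1\}$ in each case. The second supplementary law of quadratic reciprocity gives $2\in QR(p)$ if and only if $p\equiv\pm1\pmod 8$; since $p\equiv 3\pmod 8$, this forces $2\in NQR(p)$. The multiplicativity property recalled in the preceding paragraph of the paper (product of two non-residues is a residue, while a non-residue times a residue is a non-residue) applied to the identity $2\cdot 2^{-1}=1\in QR(p)$ forces $2^{-1}\in NQR(p)$ as well. The exclusions $2\neq -1$ and $2^{-1}\neq -1$ both follow from the assumption $p\neq 3$ (since $2\equiv -1\pmod p$ is equivalent to $p=3$). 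Invoking Horton's lemma yields that $S_\beta$ is a strong starter for $\mathbb{Z}_p$.

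Next, I would verify that $S_\beta$ is \emph{cardioidal} in the sense defined before Lemma \ref{lemma:Skolem}, i.e.\ each pair has the form $\{i,2i\pmod p\}$. For $\beta=2$ this is immediate with $i=x$. For $\beta=2^{-1}$ I would rewrite each pair as $\{x,2^{-1}x\}=\{2i,i\}$ by setting $i=2^{-1}x$; as $x$ runs over $QR(p)$, the parameter $i=2^{-1}x$ runs over $2^{-1}\cdot QR(p)=NQR(p)$, so every pair is genuinely of cardioidal type.

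Finally, since $p\equiv 3\pmod 8$ satisfies the hypothesis $p\equiv 1,3\pmod 8$ of Lemma \ref{lemma:Skolem}, a cardioidal starter for $\mathbb{Z}_p$ is automatically Skolem; thus $S_\beta$ is a strong Skolem starter, which is the desired conclusion. I do not expect a real obstacle in this argument: the theorem is essentially a packaging of earlier results, and the only place requiring a small amount of care is the relabeling $i=2^{-1}x$ that matches the literal definition of cardioidal for the $\beta=2^{-1}$ case.
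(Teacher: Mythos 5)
Your proposal is correct and follows essentially the same route as the paper, which justifies the theorem in one line by observing that $S_\beta$ is a cardioidal starter (strong by Horton's lemma, since $2,2^{-1}\in NQR(p)\setminus\{-1\}$ when $p\equiv3\pmod 8$ and $p\neq3$) and then invoking Lemma~\ref{lemma:Skolem}. Your write-up merely makes explicit the details the paper leaves implicit: the supplementary law giving $2\in NQR(p)$, the multiplicativity argument for $2^{-1}$, and the relabeling $i=2^{-1}x$ showing $S_{2^{-1}}$ is literally of cardioidal type.
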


Notice that, the starters given by Theorem \ref{thm:SkolemAdrian} are cardioidal starters, then by Lemma \ref{lemma:Skolem} are strong and Skolem. 

\begin{remark}
It is well-known that, if $S=\{\{x_i,y_i\}:i=1,\ldots,k\}$ is a starter for a finite additive Abelian group $G$ of odd order $n=2k+1$, then $-S=\{\{-x_i,-y_i\}:i=1,\ldots,k\}$ is also a starter for $G$. Hence, the starter $-S_2=\{\{-x,-x\beta:x\in QR(p)\}\}=\{\{y,2y\}:y\in NQR(p)\}$ (given that $-1\in NQR(p)$, then $\beta\to\beta^2$ is a bijection between $NQR(p)$ and $QR(p)$) is an cardioidal starter for $\mathbb{Z}_p$ (strong and Skolem by Lemma \ref{lemma:Skolem}). Since $2^{-1}\in NQR(p)$, then $S_{2^{-1}}=-S_2$. 
\end{remark}

Hence, Theorem \ref{thm:SkolemAdrian} gives an alternative (simple) construction of cardioidal starter for $\mathbb{Z}_p$, where $p\equiv3$ (mod 8) be an odd prime number with $p\neq3$. This cardioidal starters can be obtained from \cite{Shalaby}.

Let $p=2^kt+1$ be an odd prime, where $k>1$ a positive integer and $t$ an odd integer greater than 1. Let $r$ be a generator of $\mathbb{Z}_p^*$. Let's denote by $C_0^p=\langle r^\Delta\rangle_p$, where $\Delta=2^k$, the multiplicative subgroup of $\mathbb{Z}_p^*$ generated by $r^\Delta$ of order $t$. If
$C_j^p=r^jC_0^p$, for $j=0,\ldots,\Delta-1$, then $C_0^p,\ldots,C_{\Delta-1}^p$ is a partition of $\mathbb{Z}_p^*$. In this case, $C_i^p$ are known as \emph{cyclotomic classes of order $\Delta$} of $\mathbb{Z}_p^*$.

According with \cite{Bao}, the following set $T_\beta=\left\{\{x,\beta x\}: x\in B\right\}$, where $B=\bigcup_{j=0}^{2^{k-1}-1}C_j^p$, is a strong starter for $\mathbb{Z}_p$, for every $\beta\in C_{2^{k-1}}^p\setminus\{-1\}$.

Hence, we have the following:

\begin{teo}\label{teo:Smkolem_q} 
Let $p=2^kt+1$ be an odd prime with $k\geq3$ a positive integer, $t$ an odd integer greater than 1, and $B=\bigcup_{j=0}^{2^{k-1}-1}C_j^p$ as above. If $2\in C_{2^{k-1}}^p$, then the following sets $T_2=\left\{\{x,2 x\}: x\in B\right\}$ and $T_{2^{-1}}=\left\{\{x,2^{-1} x\}: x\in B\right\}$ are a strong Skolem starter for $\mathbb{Z}_p$.
\end{teo}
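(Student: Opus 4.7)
The plan is to reduce the theorem to two already-quoted facts: the result of Bao stated in the paragraph just before the theorem (that $T_\beta$ is a strong starter whenever $\beta\in C_{2^{k-1}}^p\setminus\{-1\}$), and Lemma \ref{lemma:Skolem} (a cardioidal starter for $\mathbb{Z}_n$ with $n\equiv 1,3\pmod 8$ is Skolem, and is strong exactly when $n\not\equiv 0\pmod 3$). Accordingly, the proof reduces to verifying that $p\equiv 1\pmod 8$, that both $2$ and $2^{-1}$ lie in $C_{2^{k-1}}^p\setminus\{-1\}$, and that $T_2$ and $T_{2^{-1}}$ are cardioidal.

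Three of these are essentially inspection. From $p=2^kt+1$ with $k\geq 3$ one has $8\mid p-1$, so $p\equiv 1\pmod 8$; since $t\geq 3$ and $k\geq 3$ one has $p\geq 25$, hence $p>3$, so in particular $2\neq -1$ in $\mathbb{Z}_p$ and $p\not\equiv 0\pmod 3$. The hypothesis $2\in C_{2^{k-1}}^p$ is given, and $T_2$ is trivially cardioidal; for $T_{2^{-1}}$, each pair $\{x,2^{-1}x\}$ rewrites as $\{y,2y\}$ under the substitution $y=2^{-1}x$, so this set is also cardioidal.

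The one real computation is to show $2^{-1}\in C_{2^{k-1}}^p$. Writing $2=r^{2^{k-1}+\Delta m}$ for some integer $m$ with $\Delta=2^k$, I would invert to obtain $2^{-1}=r^{-2^{k-1}-\Delta m}$ and reduce the exponent modulo $\Delta$; since $-2^{k-1}\equiv 2^{k-1}\pmod{2^k}$, this places $2^{-1}$ in the same cyclotomic class $C_{2^{k-1}}^p$ as $2$. Equivalently, $C_{2^{k-1}}^p$ is self-inverse because its square in the quotient $\mathbb{Z}_p^{*}/C_0^p$ is $C_0^p$. This closure-under-inversion step is the main (mild) obstacle; once it is settled, Bao's theorem yields that $T_2$ and $T_{2^{-1}}$ are strong starters, and Lemma \ref{lemma:Skolem} promotes the cardioidal property to Skolem, giving the conclusion.
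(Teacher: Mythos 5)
Your proof is correct and follows essentially the same route as the paper's: both rely on Bao's quoted result for the (strong) starter property and on Lemma \ref{lemma:Skolem} for the Skolem property, the key point being that $2\in C_{2^{k-1}}^p$ forces $2^{-1}\in C_{2^{k-1}}^p$ so that both $T_2$ and $T_{2^{-1}}$ are cardioidal. The paper asserts this last fact without justification; your exponent computation (using $-2^{k-1}\equiv 2^{k-1}\pmod{2^k}$) and your explicit checks that $p\equiv 1\pmod 8$, $p\not\equiv 0\pmod 3$, and $2\neq -1$ merely fill in details the paper leaves implicit.
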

\begin{proof}
Notice that $T_2$ and $T_{2^{-1}}$ are cardioidal starters (since $2\in C_{2^{k-1}}^p$, then $2^{-1}\in C_{2^{k-1}}^p$). Hence, by Lemma \ref{lemma:Skolem} the cardioidal starters are strong and Skolem.
\end{proof}

Notice that there exist infinitely many primes $p=2^kt+1$, with $k\geq3$ a positive integer and $t$ be an odd integer greater than 1, such that $2\in C_{2^{k-1}}^p$. For example, there are infinitely many primes $p=8n+1$ such that $2\in C_4^p$, see for example \cite{Brauer}.

It is not hard to see that $T_{2^{-1}}=-T_2$ (given that $-1\in C_{2^{k-1}}^p$). 
Theorem \ref{teo:Smkolem_q} gives an alternative (simple) construction of cardioidal starter for $\mathbb{Z}_p$, for infinitely many odd primes $p\equiv1$ (mod 8). This cardioidal starters can be obtained from \cite{Shalaby}.
%%%%%%%%%%%%%%%%%%%%%%%%
%%%%%%%%%%%%%%%%%%%%%%%%
%%%%%%%%%%%%%%%%%%%%%%%%
\section{Strong Skolem starter for $\mathbb{Z}_{p^n}$}\label{sec:Multiplicative}

In this section, we use the notation and terminology given in \cite{Shalaby}. Let $G_n$ be the group of units of the ring $\mathbb{Z}_n$ (elements invertible with respect to multiplication). Whenever the group operation is irrelevant, it is consider $G_n$ and its cyclic multiplicative subgroups $\langle x\rangle_n$ in the set-theoretical sense and denote them by $\underline{G}_n$ and $\langle\underline{x}\rangle_n$, respectively. It is well-known that, $G_n$ is a cyclic group, if and only if, $n$ is 1, 2, 4, $p$, $p^k$ or $2p^k$, where $p$ is an odd prime number and $k$ is an integer greater than 1. Finally, it is well-known that $|G_{p^k}|=p^k-p^{k-1}$. For more details of this kind of results the reader may consult \cite{Cohen:1995}.

\begin{remark}\label{remark} 
Let $p\equiv3$ (mod 8) be an odd prime number and $n\geq1$ be an integer number. If $p^i\underline{G}_{p^{n-i}}=\{x\in\mathbb{Z}_{p^n}^*: gcd(x,p^n)=p^i\}$, for every $i=0,\ldots,n-1$, then the collection $\left\{p^i\underline{G}_{p^{n-i}}\right\}_{i=0}^{n-1}$ forms a partition of $\mathbb{Z}_{p^n}^*$ into $n$ subsets, since every element $x\in\mathbb{Z}_{p^n}^*$ lies in one and only one of these set see (proof of Theorem 4.9 of \cite{Shalaby}).
\end{remark}

Let $p$ be an odd prime power. It is well-known if $r\in\mathbb{F}_p^*$ is a primitive root of $\mathbb{F}_p^*$, then $r$ is a primitive root of $G_{p^n}$, all integers $n$ greater than 1, unless $r^{p-1}\equiv1$ (mod $p^2$). In that case, $r+p$ is a primitive root of $G_{p^n}$, see for example \cite{Cohen:1995}.

\begin{teo}\label{thm:main} 
Let $p\equiv3$ (mod 8) be an odd prime, $n\geq1$ be an integer number, and $r\in\mathbb{Z}_p$ be a primitive root of $\mathbb{Z}_{p}$ such that $r^{p-1}\not\equiv1$ (mod $p^2$). If $p^iS_{p^{n-i}}=\left\{\{p^ix,2p^ix\}:x\in\langle r^2\rangle_{p^{n-i}}\right\}$, for $i=0,\ldots,n-1$, then $S=\bigcup_{i=0}^{n-1}p^iS_{p^{n-i}}$ is a strong Skolem starter for $\mathbb{Z}_{p^n}$.
\end{teo}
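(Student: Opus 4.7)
The plan is to verify directly that $S$ is a cardioidal starter for $\mathbb{Z}_{p^n}$, and then invoke Lemma \ref{lemma:Skolem} to conclude that it is both strong and Skolem. Every pair of $S$ already has the shape $\{y, 2y\}$, so the cardioidal form is built in; what needs work is the starter property (covering of elements, covering of differences). Skolem will follow since $p\equiv 3\pmod 8$ gives $p^n\equiv 3^n \in\{1,3\}\pmod 8$, and strongness from Lemma \ref{lemma:Skolem} will follow since $p\neq 3$ forces $p^n\not\equiv 0\pmod 3$.

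The main algebraic ingredients I would assemble first are: since $r^{p-1}\not\equiv 1\pmod{p^2}$, the element $r$ is a primitive root of $G_{p^{m}}$ for every $m\geq 1$, so $\langle r^2\rangle_{p^{m}}$ is precisely the set of quadratic residues in $G_{p^m}$. From $p\equiv 3\pmod 8$ the element $2$ is a non-residue mod $p$, and any square root of $2$ in $\mathbb{Z}_{p^{m}}$ would reduce to one in $\mathbb{Z}_p$, so $2\notin\langle r^2\rangle_{p^m}$. Likewise $p\equiv 3\pmod 4$ gives $-1\notin\langle r^2\rangle_{p^m}$. Therefore, for each $m$, the cosets $\langle r^2\rangle_{p^m}$ and $2\langle r^2\rangle_{p^m}$ partition $G_{p^m}$, and $\pm\langle r^2\rangle_{p^m}=G_{p^m}$.

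With these in hand, the verification splits along the partition $\mathbb{Z}_{p^n}^{*}=\bigsqcup_{i=0}^{n-1}p^i\underline{G}_{p^{n-i}}$ from Remark \ref{remark}. Within block $i$, the pairs $\{p^ix,2p^ix\}$ with $x\in\langle r^2\rangle_{p^{n-i}}$ enumerate exactly $p^i\langle r^2\rangle_{p^{n-i}}\cup 2p^i\langle r^2\rangle_{p^{n-i}} = p^i\underline{G}_{p^{n-i}}$; a collision $p^ix_1\equiv 2p^ix_2\pmod{p^n}$ would give $x_1\equiv 2x_2\pmod{p^{n-i}}$, making a quadratic residue equal to a non-residue, a contradiction. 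Summing over $i$ recovers $\mathbb{Z}_{p^n}^{*}$. For the differences, $\pm(2p^ix-p^ix)=\pm p^ix$ ranges over $\pm p^i\langle r^2\rangle_{p^{n-i}}=p^i\underline{G}_{p^{n-i}}$, and summing over $i$ again recovers $\mathbb{Z}_{p^n}^{*}$. Hence $S$ is a cardioidal starter, and Lemma \ref{lemma:Skolem} finishes the proof.

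The only mildly delicate step is transferring the quadratic-residue facts about $2$ and $-1$ from $\mathbb{Z}_p$ to $\mathbb{Z}_{p^{n-i}}$; everything else is bookkeeping along the cyclic group/Remark \ref{remark} partition. I do not foresee a genuine obstacle beyond making this transfer explicit.
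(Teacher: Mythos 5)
Your proposal is correct and follows essentially the same route as the paper's own proof: establish that $2,-1\notin\langle r^2\rangle_{p^m}$ so that $\langle r^2\rangle_{p^m}$ and $2\langle r^2\rangle_{p^m}=-\langle r^2\rangle_{p^m}$ partition $G_{p^m}$, use the partition $\mathbb{Z}_{p^n}^*=\bigcup_i p^i\underline{G}_{p^{n-i}}$ of Remark \ref{remark} to check the starter property blockwise, and conclude via Lemma \ref{lemma:Skolem}. If anything, you are slightly more explicit than the paper about lifting the non-residue facts for $2$ and $-1$ from $\mathbb{Z}_p$ to $\mathbb{Z}_{p^m}$, which the paper simply asserts.
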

\begin{proof}
Let $r\in\mathbb{Z}_p^*$ be a primitive root such that $r^{p-1}\not\equiv1$ (mod $p^2$); otherwise $r+p$ is a primitive root of $G_{p^n}$. Let $\alpha=r^2$. Hence, $\langle r^2\rangle_{p^i}$ is a cyclic subgroup of $G_{p^i}$ of order $\frac{|G_{p^i}|}{2}=\frac{1}{2}(p^i-p^{i-1})$, for all $i=1,\ldots,n$. Since $2\not\in\langle\alpha\rangle_{p}$, then $2\not\in\langle\alpha\rangle_{p^i}$, also, since $-1\not\in\langle\alpha\rangle_{p}$, then $-1\not\in\langle\alpha\rangle_{p^i}$, for all $i=1,\ldots,n$. By Remark \ref{remark}, the collection $\left\{p^i\underline{G}_{p^{n-i}}\right\}_{i=0}^{n-1}$ forms a partition of $\mathbb{Z}_{p^n}^*$, where $\underline{G}_{p^i}$ can be written as $\underline{G}_{p^i}=\langle \underline{x}\rangle_{p^i}\cup2\langle\underline{x}\rangle_{p^i}$. Let $p^iS_{p^{n-i}}=\left\{\{p^ix,2p^ix\}:x\in\langle\alpha\rangle_{p^{n-i}}\right\}$, for $i=0,\ldots,n-1$. It is easy to see that $\{\pm p^ix:x\in\langle\alpha\rangle_{p^{n-i}}\}=p^i\underline{G}_{p^{n-i}}$, for all $i=0,\dots,n-1$, since $2\langle\underline{x}\rangle_{p^i}=-\langle\underline{x}\rangle_{p^i}$. Hence, the set $S=\bigcup_{i=0}^{n-1}p^iS_{p^{n-i}}$ is a starter. Moreover, each element of $S$ is of type $\{x,2x \mbox{ (mod $p^j$)}\}$, for all $j=1,\ldots,n$. Then, by Lemma \ref{lemma:Skolem} the starter $S$ is strong and Skolem. 
\end{proof}

\begin{coro}\label{coro:final} 
Let $p\equiv3$ (mod 8) be an odd prime, $n\geq1$ be an integer number, and $r\in\mathbb{Z}_p$ be a primitive root of $\mathbb{Z}_{p}$ such that $r^{p-1}\not\equiv1$ (mod $p^2$). If $p^i\hat{S}_{p^{n-i}}=\left\{\{p^ix,2^{-1}p^ix\}:x\in\langle\alpha\rangle_{p^{n-i}}\right\}$, for $i=0,\ldots,n-1$, then $\hat{S}=\bigcup_{i=0}^{n-1}p^i\hat{S}_{p^{n-i}}$ is a strong Skolem starter for $\mathbb{Z}_{p^n}$.
\end{coro}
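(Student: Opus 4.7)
The plan is to derive Corollary \ref{coro:final} directly from Theorem \ref{thm:main} by recognizing $\hat{S}$ as the additive inverse $-S$ of the starter just constructed, and then invoking Lemma \ref{lemma:Skolem}. The Remark between Theorems \ref{thm:SkolemAdrian} and \ref{teo:Smkolem_q} already records that $-S$ is a starter whenever $S$ is, so the only extra ingredient needed is that each pair of $\hat{S}$ remains cardioidal. This is immediate: setting $y = 2^{-1}p^{i}x$ rewrites the pair $\{p^{i}x,\,2^{-1}p^{i}x\}$ as $\{y,\,2y\}$ in $\mathbb{Z}_{p^{n}}$, a legitimate move because $2$ is a unit modulo $p^{n}$. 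Consequently, Lemma \ref{lemma:Skolem} yields both the strong and the Skolem properties (under the tacit assumption $p \neq 3$, so that $p^{n} \not\equiv 0 \pmod{3}$).

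To justify $\hat{S} = -S$ I would take a generic pair $\{p^{i}x,\,2p^{i}x\}$ of $S$ with $x \in \langle\alpha\rangle_{p^{n-i}}$, negate it to $\{-p^{i}x,\,-2p^{i}x\}$, and then reparameterize via $z = -2x$; this turns the pair into $\{p^{i}z,\,2^{-1}p^{i}z\}$. The reparameterization is a bijection on $\langle\alpha\rangle_{p^{n-i}}$ precisely when $-2 \in \langle\alpha\rangle_{p^{n-i}}$, and this is a direct consequence of the ingredients already used in the proof of Theorem \ref{thm:main}: both $-1$ and $2$ lie outside $\langle\alpha\rangle_{p^{n-i}}$ (by reduction mod $p$, where $-1,2 \in NQR(p)$ since $p \equiv 3 \pmod{8}$), so their product $-2$ lies in the trivial coset $\langle\alpha\rangle_{p^{n-i}}$, as required.

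The only real check is verifying $-2 \in \langle\alpha\rangle_{p^{n-i}}$ uniformly across the indices $i \in \{0,\ldots,n-1\}$; once that is in hand, the starter property transfers from $S$ to $\hat{S} = -S$ automatically and the cardioidal form is visible on inspection. An equally short alternative, bypassing the identification $\hat{S} = -S$, would be to rerun the proof of Theorem \ref{thm:main} essentially verbatim, substituting $2^{-1}$ for $2$ throughout and using the coset identity $2^{-1}\langle\alpha\rangle_{p^{n-i}} = 2\langle\alpha\rangle_{p^{n-i}}$ (valid because $4 = 2^{2} \in \langle\alpha\rangle_{p^{n-i}}$) to conclude that the pairs cover $p^{i}\underline{G}_{p^{n-i}}$ and that the differences $\pm 2^{-1}p^{i}x$ exhaust the same set.
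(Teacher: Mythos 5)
Your proposal is correct and matches the paper's (very terse) justification: the paper simply remarks after the corollary that $\hat{S}=-S$ because $2\langle\underline{x}\rangle_{p^i}=-\langle\underline{x}\rangle_{p^i}$, which is exactly your observation that $-2\in\langle\alpha\rangle_{p^{n-i}}$, and then the cardioidal form $\{y,2y\}$ with $y=2^{-1}p^ix$ plus Lemma \ref{lemma:Skolem} finishes as you say. Your explicit verification of $-2\in\langle\alpha\rangle_{p^{n-i}}$ and your note that $p\neq3$ is tacitly needed are both sound and merely make precise what the paper leaves implicit.
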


It is not hard to see that $\hat{S}=-S$, since $2\langle\underline{x}\rangle_{p^i}=-\langle\underline{x}\rangle_{p^i}$, for all $i=0,\dots,n-1$. Theorem \ref{thm:main} gives an alternative (simple) construction of cardioidal starter for $\mathbb{Z}_{p^n}$, where  $p=2^kt+1$ is an odd prime with $k\geq3$ a positive integer, $t$ an odd integer greater than 1 and $n$ is an integer greater than 1.

\begin{teo}\label{thm:New_skolem_p^n}
	Let $p=2^kt+1$ be an odd prime with $k\geq3$ a positive integer, with $t$ an odd integer greater than 1. If $2\in C_{2^{k-1}}^p$ and $n$ is an integer greater than 1, then the set $T=\bigcup_{i=0}^{n-1}p^iT_{p^{n-i}}$, where $$p^iT_{p^{n-i}}=\left\{\{p^ix,2p^ix\}:x\in r^{j}\langle r^{2^k}\rangle_{p^i}, j=0,\ldots,2^{k-1}-1\right\},$$is a strong Skolem starter for $\mathbb{Z}_{p^n}$.
\end{teo}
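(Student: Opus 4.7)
The plan is to transplant the proof of Theorem \ref{thm:main} wholesale, substituting the role played there by the index-$2$ subgroup $\langle r^2\rangle$ (quadratic residues) with the index-$2^{k-1}$ union of cyclotomic classes $B_m = \bigcup_{j=0}^{2^{k-1}-1} C_j^{p^m}$ inside $G_{p^m}$. First I would fix a primitive root $r$ of $\mathbb{Z}_p$ satisfying $r^{p-1}\not\equiv 1\pmod{p^2}$ (replacing $r$ by $r+p$ otherwise), so that the same $r$ is simultaneously a primitive root of every $G_{p^m}$, and define $C_j^{p^m}=r^j\langle r^{2^k}\rangle_{p^m}$ for $m=1,\ldots,n$. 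The reading of the statement I would adopt is that $x$ ranges over $B_{n-i}$ (the subscript $p^i$ appearing there is a typo for $p^{n-i}$), matching the partition of $\mathbb{Z}_{p^n}^*$ supplied by Remark \ref{remark}.

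Next I would establish two facts that make the construction tick for every $m$: (a) $-1\in C_{2^{k-1}}^{p^m}$, because $-1=r^{p^{m-1}(p-1)/2}=r^{p^{m-1}2^{k-1}t}$ and $p^{m-1}t$ is odd, so the exponent reduces to $2^{k-1}$ modulo $2^k$; and (b) the hypothesis $2\in C_{2^{k-1}}^p$ propagates to $2\in C_{2^{k-1}}^{p^m}$. Fact (b) is the most delicate point and the one place where the argument departs from Theorem \ref{thm:main}: it hinges on the observation that if $r^e\equiv 2\pmod{p^m}$ then also $r^e\equiv 2\pmod p$, and the cyclotomic class index is $e\bmod 2^k$ at both levels, which is well-defined because $2^k\mid p-1\mid p^{m-1}(p-1)$. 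From (a) and (b) together I get $2B_{n-i}=-B_{n-i}$, so $B_{n-i}$ and $2B_{n-i}$ partition $\underline{G}_{p^{n-i}}$.

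Finally, as in Theorem \ref{thm:main}, using Remark \ref{remark} to write $\mathbb{Z}_{p^n}^*=\bigsqcup_{i=0}^{n-1}p^i\underline{G}_{p^{n-i}}$, the set $p^iT_{p^{n-i}}$ pairs each element of $p^iB_{n-i}$ with its negative in $p^iG_{p^{n-i}}$, so the underlying elements of $T$ cover $\mathbb{Z}_{p^n}^*$ with no repetition, and the differences $\pm p^ix$ do the same; hence $T$ is a starter. Every pair of $T$ has the cardioidal form $\{y,2y\pmod{p^n}\}$, and the hypotheses $k\geq 3$, $t\geq 3$ force $p\geq 25$ and in particular $p^n\not\equiv 0\pmod 3$, so Lemma \ref{lemma:Skolem} applies and yields both the Skolem and strong properties, completing the argument. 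I expect the main obstacle to be purely bookkeeping around fact (b) above, namely ensuring the cyclotomic class index of $2$ is stable as we move from $\mathbb{Z}_p$ up to $\mathbb{Z}_{p^n}$, for which the uniform choice of primitive root is essential.
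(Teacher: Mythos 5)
Your proposal is correct and follows essentially the same route as the paper: fix a primitive root $r$ of $\mathbb{Z}_p$ with $r^{p-1}\not\equiv1\pmod{p^2}$, show that $-1$ and $2$ lie in $C_{2^{k-1}}^{p^m}$ for every $m$, use the partition $\mathbb{Z}_{p^n}^*=\bigcup_{i=0}^{n-1}p^i\underline{G}_{p^{n-i}}$ of Remark \ref{remark} to verify the starter property levelwise, and invoke Lemma \ref{lemma:Skolem} on the resulting cardioidal starter. In fact you supply the two points the paper dismisses as ``easy to see'' --- the reduction-mod-$p$ argument showing the cyclotomic class index of $2$ is stable from $\mathbb{Z}_p$ up to $\mathbb{Z}_{p^m}$, and the exponent computation placing $-1$ in $C_{2^{k-1}}^{p^m}$ --- and you correctly read the subscript $p^i$ in the displayed definition as $p^{n-i}$.
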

\begin{proof}
Let $r\in\mathbb{Z}_p^*$ be a primitive root of $\mathbb{Z}_{p^n}$ such that $r^{p-1}\not\equiv1$ (mod $p^2$). 
Hence, $\langle r^{2^k}\rangle_{p^i}$ is a cyclic subgroup of $G_{p^i}$ of order $\frac{|G_{p^i}|}{2^k}$, for all $i=1,\ldots,n$. It is easy to see that, if $2\in C_{2^{k-1}}^p$ then $2\in r^{2^{k-1}}\langle r^{2^k}\rangle_{p^i}$ and if $-1\in C_{2^{k-1}}^p$ then $-1\in r^{2^{k-1}}\langle r^{2^k}\rangle_{p^i}$, for all $i=2,\ldots,n$. Since $\left\{p^i\underline{G}_{p^{n-i}}\right\}_{i=0}^{n-1}$ forms a partition of $\mathbb{Z}_{p^n}^*$, let  $$p^iT_{p^{n-i}}=\left\{\{p^ix,2p^ix\}:x\in r^{j}\langle r^{2^k}\rangle_{p^i}, j=0,\ldots,2^{k-1}-1\right\},$$for $i=0,\ldots,n-1$. It is easy to see that 
$$\{\pm p^ix:x\in r^{j}\langle r^{2^k}\rangle_{p^i},j=0,\ldots,2^{k-1}-1\}=p^i\underline{G}_{p^{n-i}},$$for all $i=0,\dots,n-1$. Hence, the set $T=\bigcup_{i=0}^{n-1}p^iT_{p^{n-i}}$ is a starter. Given that each element of $S$ is of type $\{x,2x \mbox{ (mod $p^j$)}\}$, for all $j=1,\ldots,n$, then, by Lemma \ref{lemma:Skolem} the starter $S$ is strong and Skolem. 
\end{proof}

\begin{coro}
Let $p=2^kt+1$ be an odd prime with $k\geq3$ a positive integer, with $t$ an odd integer greater than 1. If $2\in C_{2^{k-1}}^p$ and $n$ is an integer greater than 1, then the set $\hat{T}=\bigcup_{i=0}^{n-1}p^i\hat{T}_{p^{n-i}}$, where $$p^i\hat{T}_{p^{n-i}}=\left\{\{p^ix,2^{-1}p^ix\}:x\in r^{j}\langle r^{2^k}\rangle_{p^i}, j=0,\ldots,2^{k-1}-1\right\},$$ is a strong Skolem starter for $\mathbb{Z}_{p^n}$.	
\end{coro}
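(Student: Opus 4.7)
The plan is to deduce this corollary directly from Theorem~\ref{thm:New_skolem_p^n} by establishing the identity $\hat{T}=-T$, exactly mirroring the way Corollary~\ref{coro:final} follows from Theorem~\ref{thm:main}. Once this identity is in hand, $\hat{T}$ is a starter (since the negative of a starter is a starter), each pair $\{p^ix,2^{-1}p^ix\}$ is cardioidal via the substitution $z=2^{-1}p^ix$, which rewrites it as $\{z,2z\}$, and Lemma~\ref{lemma:Skolem}—applicable because $p^n\equiv1\pmod{8}$ and $3\nmid p^n$—delivers the strong-Skolem conclusion.

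The first step I would carry out is to locate $-1$ and $2^{-1}$ inside the cyclotomic partition. Writing $(p-1)/2=2^{k-1}t$ with $t$ odd yields $r^{(p-1)/2}=-1$ together with $2^{k-1}t\equiv 2^{k-1}\pmod{2^k}$, so $-1\in r^{2^{k-1}}\langle r^{2^k}\rangle_p=C_{2^{k-1}}^p$. Because $C_i^p\cdot C_j^p=C_{i+j\bmod 2^k}^p$, the class $C_{2^{k-1}}^p$ is closed under inversion; hence the hypothesis $2\in C_{2^{k-1}}^p$ forces $2^{-1}\in C_{2^{k-1}}^p$. Multiplying gives $-2\in C_0^p=\langle r^{2^k}\rangle_p$, and since the primitive root $r$ chosen as in Theorem~\ref{thm:New_skolem_p^n} also generates each $G_{p^i}$, this membership lifts to $-2\in\langle r^{2^k}\rangle_{p^i}$ for every $i=1,\ldots,n$.

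Next I would perform the reindexing. For a generic pair $\{p^ix,2p^ix\}\in T$, set $y=-2x$; since multiplication by $-2$ permutes each coset $r^{j}\langle r^{2^k}\rangle_{p^i}$, the new index $y$ ranges over the same allowed set as $x$. The identity $2^{-1}y=-x$ then gives the \emph{unordered} pair equality $\{-p^ix,-2p^ix\}=\{2^{-1}p^iy,p^iy\}$, so $-p^iT_{p^{n-i}}=p^i\hat{T}_{p^{n-i}}$; taking the union over $i$ yields $-T=\hat{T}$, and the conclusion follows as sketched in the first paragraph.

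The main obstacle, exactly as in the proof of Theorem~\ref{thm:New_skolem_p^n}, is purely bookkeeping: verifying that the cyclotomic identities $-1\in C_{2^{k-1}}^p$ and $-2\in\langle r^{2^k}\rangle_{p^i}$ propagate uniformly across all powers $p^i$, and that the substitution $y=-2x$ preserves the pair structure at the level of unordered $2$-sets. Once those checks are made, the passage from $T$ to $\hat{T}$ by negation is immediate.
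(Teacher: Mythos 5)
Your proposal is correct and follows essentially the same route as the paper, which disposes of this corollary by the one-line observation that $\hat{T}=-T$ (using $-1\in C_{2^{k-1}}^p$, hence $-2\in C_0^p$) together with Lemma~\ref{lemma:Skolem}; your write-up merely makes the coset bookkeeping and the lift from $\mathbb{Z}_p$ to $\mathbb{Z}_{p^i}$ explicit.
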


It is not hard to see that $\hat{T}=-T$ (given that $-1\in C_{2^{k-1}}^p$). 
Theorem \ref{thm:New_skolem_p^n} gives an alternative (simple) construction of cardioidal starter for $\mathbb{Z}_{p^n}$, where  $p=2^kt+1$ is an odd prime with $k\geq3$ a positive integer, $t$ an odd integer greater than 1 and $n$ is an integer greater than 1.
%%%%%%%%%%%%%%%%%%%%%%%%%%%%%%%%%%%%%%%%%%%%%%%
%%%%%%%%%%%%%%%%%%%%%%%%%%%%%%%%%%%%%%%%%%%%%%%
%%%%%%%%%%%%%%%%%%%%%%%%%%%%%%%%%%%%%%%%%%%%%%%%
\section{Strong Skolem starters for $\mathbb{Z}_{pq}$}\label{sec:Zpq}
Let $p$ and $q$ be odd prime numbers such that $p<q$ and $(p-1)\not|(q-1)$. We have $\underline{G}_{pq}=\{x\in\mathbb{Z}_{pq}^*: gcd(x,pq)=1\}$, with $|\underline{G}_{pq}|=(p-1)(q-1)$, see for example \cite{Cohen:1995}. Hence, $p\mathbb{Z}_q^*$, $q\mathbb{Z}_p^*$ and $\underline{G}_{pq}$ forms a partition of $\mathbb{Z}_{pq}^*$, since every element $x\in\mathbb{Z}_{pq}^*$ lies in one and only one of these sets. It's not hard to see that, if $r\in\mathbb{Z}_p^*$ is a primitive root, then $|\langle r \rangle_{pq}|=lcm(p-1,q-1)=\frac{(p-1)(q-1)}{2}$, since $(p-1)\not|(q-1)$.

\begin{lema}\label{lema:New_-1}
Let $t_1$ and $t_2$ be two odd integers greater than 1 such that $t_1<t_2$, and let $k\geq3$ be a integer. If $p=2^kt_1+1$ and $q=2^kt_2+1$ are prime numbers and $r\in\mathbb{Z}_p^*$ is such that $r\in NQR(p)$ and $r\in NQR(q)$, then $r^{\frac{(p-1)(q-1)}{2^{k+1}}}\equiv-1$ (mod $pq$). Moreover, if $r$ is a primitive root of $\mathbb{Z}_p^*$ and $\mathbb{Z}_q^*$, then $-1\in r^{2^{k-1}}\langle r^{2^k}\rangle_{pq}$.
\end{lema}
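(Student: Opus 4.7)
The plan is to reduce everything to the elementary identity $\frac{(p-1)(q-1)}{2^{k+1}} = 2^{k-1}t_1 t_2$, which is immediate from $p-1 = 2^k t_1$ and $q-1 = 2^k t_2$. The first claim will then follow from the Chinese Remainder Theorem and Euler's criterion, and the \emph{moreover} part is a purely arithmetic manipulation of that same exponent.

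First I would verify $r^{2^{k-1} t_1 t_2} \equiv -1 \pmod p$ and, symmetrically, $r^{2^{k-1} t_1 t_2} \equiv -1 \pmod q$. Since $r \in NQR(p)$, Euler's criterion yields $r^{(p-1)/2} = r^{2^{k-1} t_1} \equiv -1 \pmod p$; raising both sides to the \emph{odd} power $t_2$ gives $r^{2^{k-1} t_1 t_2} \equiv (-1)^{t_2} = -1 \pmod p$. The symmetric argument, using $r \in NQR(q)$ and the odd exponent $t_1$, delivers the analogous congruence modulo $q$. Lifting these two congruences by CRT proves the first assertion.

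For the second part, if $r$ is a primitive root of both $\mathbb{Z}_p^*$ and $\mathbb{Z}_q^*$ then it is in particular a non-quadratic residue in each, so the first claim is in force and $-1 \equiv r^{2^{k-1} t_1 t_2} \pmod{pq}$. Since $t_1 t_2$ is odd, I would write $t_1 t_2 = 1 + 2m$ for an integer $m \geq 0$. Substituting gives $r^{2^{k-1} t_1 t_2} = r^{2^{k-1}(1+2m)} = r^{2^{k-1}} \cdot (r^{2^k})^m$, so $-1$ lies in the coset $r^{2^{k-1}} \langle r^{2^k}\rangle_{pq}$, as required.

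I do not foresee any real obstacle: the argument is linear and uses nothing beyond Euler's criterion, the Chinese Remainder Theorem, and the parity of $t_1$ and $t_2$. The only point that requires attention is to invoke oddness of $t_1$ and $t_2$ at the correct two places, since this is what simultaneously turns $(-1)^{t_j}$ into $-1$ in the CRT step and allows the clean decomposition $2^{k-1} t_1 t_2 = 2^{k-1} + 2^k m$ in the coset step.
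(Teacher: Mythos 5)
Your proof is correct and follows essentially the same route as the paper: both arguments rest on Euler's criterion applied separately modulo $p$ and modulo $q$, the observation that $t_1=\frac{p-1}{2^k}$ and $t_2=\frac{q-1}{2^k}$ are odd, and the Chinese Remainder Theorem to combine the two congruences. Your treatment of the \emph{moreover} part is in fact slightly cleaner than the paper's: where the paper deduces coset membership componentwise via the isomorphism $G_{pq}\cong G_p\times G_q$ (and states the final implication somewhat elliptically), you work directly modulo $pq$ by writing $t_1t_2=1+2m$ and factoring $r^{2^{k-1}t_1t_2}=r^{2^{k-1}}\cdot\left(r^{2^k}\right)^m$, which exhibits $-1$ as an explicit element of the coset $r^{2^{k-1}}\langle r^{2^k}\rangle_{pq}$.
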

\begin{proof}
Recalling that $G_{m}$ is the group of units of $\mathbb{Z}_{m}$. It is well known that the map $\Psi:G_{pq}\to G_{p}\times G_{q}$, defined by $\Psi(k_{pq})=(k_p,k_q)$, is an isomorphism between $G_{pq}$ and $G_p\times G_q$, see for example \cite{Childs}. Since $p$ and $q$ are prime numbers, then $G_p=\mathbb{Z}_p^*$ and $G_q=\mathbb{Z}_q^*$. Let $r\in\mathbb{Z}_p^*$ such that $r\in NQR(p)$ and $r\in NQR(q)$. Hence, we have $r^{\frac{p-1}{2}}=-1$ (mod $p$) and $r^{\frac{q-1}{2}}=-1$ (mod $q$). Then
	\begin{eqnarray*}
		\Psi(r^{\frac{(p-1)(q-1)}{2^{k+1}}}_{pq})&=&(r^{\frac{(p-1)(q-1)}{2^{k+1}}}_p,r^{\frac{(p-1)(q-1)}{2^{k+1}}}_q)=(r_p^{\frac{p-1}{2}\cdot\frac{q-1}{2^k}},r_q^{\frac{q-1}{2}\cdot\frac{p-1}{2^k}}),\\
		&=&(-1_p,-1_q)=\Psi(-1_{pq})
	\end{eqnarray*}
	since $\frac{p-1}{2^k}$ and $\frac{q-1}{2^k}$ are odd integers. Hence, if $r$ is a primitive root of $\mathbb{Z}_p^*$ and $\mathbb{Z}_q^*$, then $-1_p\in r^{2^{k-1}}\langle r^{2^k}\rangle_p$ and $-1_q\in r^{2^{k-1}}\langle r^k\rangle_q$, which implies that $r^{2^{k-1}}\langle r^k\rangle_{pq}$.
\end{proof}

\begin{lema}\label{lema:New_2}
Let $t_1$ and $t_2$ be two odd integers greater than 1 such that $t_1<t_2$, and let $k\geq3$ be a integer. If $p=2^kt_1+1$ and $q=2^kt_2+1$ are prime numbers and $r\in\mathbb{Z}_p^*$ is a primitive root of $\mathbb{Z}_p^*$ and $\mathbb{Z}_q^*$ such that $2\in r^{2^{k-1}}\langle r^{2^k}\rangle_p$ and $2\in r^{2^{k-1}}\langle r^{2^k}\rangle_q$, then $2\in r^{2^{k-1}}\langle r^{2^k}\rangle_{pq}$.
\end{lema}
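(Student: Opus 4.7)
My plan is to imitate the Chinese Remainder Theorem argument used to prove Lemma \ref{lema:New_-1}. The main tool is again the isomorphism $\Psi: G_{pq} \to G_p \times G_q$, $\Psi(x_{pq}) = (x_p, x_q)$, which transports the coset question into the direct product, where the indices of $2$ modulo $p$ and modulo $q$ can be read off separately and then stitched together.

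From the two hypotheses I would write $2_p = r_p^{2^{k-1} + 2^k a}$ in $\mathbb{Z}_p^*$ and $2_q = r_q^{2^{k-1} + 2^k b}$ in $\mathbb{Z}_q^*$ for some non-negative integers $a, b$. The desired conclusion $2 \in r^{2^{k-1}}\langle r^{2^k}\rangle_{pq}$ is equivalent to producing an integer $c$ with $\Psi(r_{pq}^{2^{k-1} + 2^k c}) = (2_p, 2_q)$. Because $r$ is a primitive root of both $\mathbb{Z}_p^*$ and $\mathbb{Z}_q^*$, this collapses to the pair of congruences $2^k c \equiv 2^k a \pmod{p-1}$ and $2^k c \equiv 2^k b \pmod{q-1}$. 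Using $p - 1 = 2^k t_1$ and $q - 1 = 2^k t_2$ with $t_1, t_2$ odd, the common factor $2^k$ cancels and the system becomes $c \equiv a \pmod{t_1}$ and $c \equiv b \pmod{t_2}$. A CRT lift then produces the required $c$, and the exponent $2^{k-1} + 2^k c$ automatically sits in the residue class $2^{k-1}$ modulo $2^k$, placing $2$ in the prescribed coset.

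I expect the main obstacle to be the consistency of this simultaneous congruence: the CRT genuinely requires $a \equiv b \pmod{\gcd(t_1, t_2)}$, which is not self-evident from the stated hypotheses (unlike the case of $-1$ in Lemma \ref{lema:New_-1}, where the explicit form of the exponent forces compatibility because $2^{k-1} t_1$ and $2^{k-1} t_2$ vanish modulo any common divisor of $t_1, t_2$). I therefore anticipate the proof either exploits an implicit coprimality of $t_1$ and $t_2$, or leverages the fact that a common primitive root $r$ mod $p$ and mod $q$ forces the two discrete logarithms of $2$ to agree on the overlap of the cyclic structures; this compatibility step is the technical crux on which the rest of the argument hinges.
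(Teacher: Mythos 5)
Your proposal follows the same route as the paper's proof --- transporting the question through the isomorphism $\Psi:G_{pq}\to G_p\times G_q$ --- but as written it is not a complete proof, and you have correctly put your finger on exactly where it fails. Writing $2\equiv r^{2^{k-1}+2^ka}\pmod p$ and $2\equiv r^{2^{k-1}+2^kb}\pmod q$, the conclusion requires a single exponent $2^{k-1}+2^kc$ that works simultaneously, i.e.\ $c\equiv a\pmod{t_1}$ and $c\equiv b\pmod{t_2}$; this system is solvable if and only if $a\equiv b\pmod{\gcd(t_1,t_2)}$, and nothing in the hypotheses supplies that congruence when $\gcd(t_1,t_2)>1$: the hypotheses constrain the two discrete logarithms only modulo $2^k$, which says nothing about them modulo the odd number $\gcd(t_1,t_2)$. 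Equivalently, the needed compatibility is precisely the assertion that $2\in\langle r\rangle_{pq}$, a subgroup of index $2^k\gcd(t_1,t_2)$ in $G_{pq}$, and that is not automatic.

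You should also know that the paper's own proof does not close this gap either. It argues by contradiction: if $2\notin r^{2^{k-1}}\langle r^{2^k}\rangle_{pq}$, then $2\in r^{j}\langle r^{2^k}\rangle_{pq}$ for some $j\in\{0,\ldots,2^{k-1}-1\}$, and then it reads off the exponent modulo $2^k$ on each coordinate to reach a contradiction. But the cosets $r^{j}\langle r^{2^k}\rangle_{pq}$ cover only $\langle r\rangle_{pq}$, not all of $G_{pq}$, so this dichotomy presupposes $2\in\langle r\rangle_{pq}$ --- which is exactly the compatibility condition you flagged and could not derive. The lemma is justified under an additional hypothesis such as $\gcd(t_1,t_2)=1$ (in which case your CRT lift goes through verbatim) or the direct assumption $2\in\langle r\rangle_{pq}$; without something of that kind, neither your argument nor the paper's establishes the claim. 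In short, your approach and your diagnosis of the technical crux are both accurate, but the proposal remains incomplete at the one step that actually matters.
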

\begin{proof}
Let $\Psi:G_{pq}\to G_{p}\times G_q$ given by $\Psi(k_{pq})=(k_p,k_q)$, the isomorphism between $G_{pq}$ and $G_p\times G_q$, and let $r\in\mathbb{Z}_p^*$ be a primitive root of $\mathbb{Z}_p^*$ and $\mathbb{Z}_q^*$ such that $2\in r^{2^{k-1}}\langle r^{2^k}\rangle_p$ and $2\in r^{2^{k-1}}\langle r^{2^k}\rangle_q$. Assume that $2\not\in r^{2^{k-1}}\langle r^{2^k}\rangle_{pq}$. 
Hence, $2\in r^j\langle r^{2^k}\rangle_{pq}$, for some $j\in\{0,\ldots,2^{k-1}-1\}$, which implies that $r^{2^ks+j}=2$ (mod $pq$), for some $s\in\{0,\ldots,|\langle r^{2^k}\rangle_{pq}|-1\}$. Therefore
	\begin{center}
		$\Psi(2)=\Psi(r^{2^ks+j}_{pq})=(r^{2^ks+j}_p,r^{2^ks+j}_q)\neq(2,2)$,
	\end{center}
which is a contradiction. Hence $2\in r^{2^{k-1}}\langle r^{2^k} \rangle_{pq}$.
\end{proof}

The author in \cite{AvilaAKCE} proved the following:

\begin{teo}\cite{AvilaAKCE}\label{thm:New_Skolem_pq}
Let $p,q\equiv3$ (mod 8) be odd prime numbers such that $p<q$ and $(p-1)\not|(q-1)$. And let $r\in\mathbb{Z}_p^*$ be a primitive root of $\mathbb{Z}_p^*$ and $\mathbb{Z}_q^*$. If $pS_{q}=\left\{\{px,2px\}:x\in QR(q)\right\}$, $qS_{p}=\left\{\{qx,2qx\}:x\in QR(p)\right\}$ and
$S_{pq}=\left\{\{x,2x\}:x\in\langle r^2\rangle_{pq}\right\}\cup\left\{\{\lambda x,2\lambda x\}:x\in\langle r^2\rangle_{pq}\right\}$, where $\lambda\not\in\langle r^2\rangle_{pq}\cup2\langle r^2\rangle_{pq}$, then set $S=pS_{q}\cup qS_{p}\cup S_{pq}$ is a strong Skolem starter for $\mathbb{Z}_{pq}$.
\end{teo}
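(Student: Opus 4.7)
The plan is to verify that $S$ is a starter for $\mathbb{Z}_{pq}$ each of whose pairs has the cardioidal form $\{i,2i\pmod{pq}\}$, and then to invoke Lemma \ref{lemma:Skolem}. Since $p,q\equiv 3\pmod 8$ forces $pq\equiv 1\pmod 8$, and (because $p,q>3$) $pq\not\equiv 0\pmod 3$, Lemma \ref{lemma:Skolem} will immediately supply both the strong and the Skolem properties once the starter axioms are established.

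For the starter axioms, I would exploit the partition $\mathbb{Z}_{pq}^*=p\mathbb{Z}_q^*\cup q\mathbb{Z}_p^*\cup\underline{G}_{pq}$ recalled at the beginning of Section \ref{sec:Zpq}, and check separately that the components and signed differences of $pS_q$ fill $p\mathbb{Z}_q^*$, those of $qS_p$ fill $q\mathbb{Z}_p^*$, and those of $S_{pq}$ fill $\underline{G}_{pq}$. For $pS_q$, the key facts are $2\in NQR(q)$ and $-1\in NQR(q)$ (both from $q\equiv 3\pmod 8$); hence $QR(q)\cup 2\cdot QR(q)=QR(q)\cup NQR(q)=\mathbb{Z}_q^*$, so the components of $pS_q$ give $p\mathbb{Z}_q^*$, and the signed differences $\pm(2px-px)=\pm px$ with $x\in QR(q)$ also cover $p\mathbb{Z}_q^*$. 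The case of $qS_p$ is entirely symmetric.

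The more delicate piece is $S_{pq}$. Here I would note that $\langle r^2\rangle_{pq}$ is a subgroup of $\underline{G}_{pq}$ of index $4$ (using $|\langle r\rangle_{pq}|=\mathrm{lcm}(p-1,q-1)=(p-1)(q-1)/2$), and then argue that the four cosets $\langle r^2\rangle_{pq}$, $2\langle r^2\rangle_{pq}$, $\lambda\langle r^2\rangle_{pq}$, $2\lambda\langle r^2\rangle_{pq}$ are pairwise distinct, hence partition $\underline{G}_{pq}$. Distinctness of the first two follows from $2\notin\langle r^2\rangle_{pq}$, which holds because $2\in NQR(p)$ (checked coordinatewise through the CRT isomorphism $G_{pq}\cong G_p\times G_q$, in the spirit of Lemma \ref{lema:New_2}); the hypothesis on $\lambda$ handles $\lambda\langle r^2\rangle_{pq}$; and the fourth is separated from the others by observing that $4\in\langle r^2\rangle_{pq}$ (since $4=2^2\in QR(p)\cap QR(q)$), which makes $2\langle r^2\rangle_{pq}$ an order-$2$ element of $\underline{G}_{pq}/\langle r^2\rangle_{pq}$ and therefore forces $2\lambda\langle r^2\rangle_{pq}$ to be the remaining coset. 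For the signed differences on $\underline{G}_{pq}$, the critical step is $-1\in 2\langle r^2\rangle_{pq}$, which follows from $-2=(-1)(2)\in QR(p)\cap QR(q)$ (a product of two non-quadratic residues in each prime); then $\pm x$ with $x\in\langle r^2\rangle_{pq}$ covers $\langle r^2\rangle_{pq}\cup 2\langle r^2\rangle_{pq}$, and $\pm\lambda x$ covers $\lambda\langle r^2\rangle_{pq}\cup 2\lambda\langle r^2\rangle_{pq}$, together exhausting $\underline{G}_{pq}$.

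The main obstacle, in my view, is the coset bookkeeping inside $\underline{G}_{pq}$: because $G_{pq}$ is not cyclic, every assertion about membership in $\langle r^2\rangle_{pq}$ has to be translated through the CRT isomorphism, exactly as in Lemmas \ref{lema:New_-1} and \ref{lema:New_2}. Everything else—verifying $|S|=(pq-1)/2$, checking that each pair has the cardioidal shape $\{y,2y\pmod{pq}\}$, and invoking Lemma \ref{lemma:Skolem}—is routine.
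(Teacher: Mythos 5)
Your argument is correct and follows the same route the paper uses for this family of results: the paper states Theorem \ref{thm:New_Skolem_pq} without proof (citing \cite{AvilaAKCE}), but its proof of the analogous Theorem \ref{thm:New_main_pq} is exactly your scheme --- split $\mathbb{Z}_{pq}^*$ into $p\mathbb{Z}_q^*$, $q\mathbb{Z}_p^*$ and $\underline{G}_{pq}$, check components and signed differences on each block, observe the starter is cardioidal, and finish with Lemma \ref{lemma:Skolem}. Your coset bookkeeping in $\underline{G}_{pq}$ (in particular $4\in\langle r^2\rangle_{pq}$ and $-1\in 2\langle r^2\rangle_{pq}$) is precisely the detail the paper leaves implicit, and it relies, just as the paper does, on the standing identification $|\langle r\rangle_{pq}|=\frac{(p-1)(q-1)}{2}$ made at the start of Section \ref{sec:Zpq}.
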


\begin{coro}\cite{AvilaAKCE}\label{coro:New_Skolem}
Let $p,q\equiv3$ (mod 8) be odd prime numbers such that $p<q$ and $(p-1)\not|(q-1)$. And let $r\in\mathbb{Z}_p^*$ be a primitive root of $\mathbb{Z}_p^*$ and $\mathbb{Z}_q^*$. If $pS_{q}=\left\{\{px,2^{-1}px\}:x\in QR(q)\right\}$, $qS_{p}=\left\{\{qx,2^{-1}qx\}:x\in QR(p)\right\}$ and
$S_{pq}=\left\{\{x,2^{-1}x\}:x\in\langle r^2\rangle_{pq}\right\}\cup\left\{\{\lambda x,2^{-1}\lambda x\}:x\in\langle r^2\rangle_{pq}\right\}$, where $\lambda\not\in\langle r^2\rangle_{pq}\cup2\langle r^2\rangle_{pq}$, then set $\hat{S}=pS_{q}\cup qS_{p}\cup S_{pq}$ is a strong Skolem starter for $\mathbb{Z}_{pq}$.
\end{coro}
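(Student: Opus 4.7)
The plan is to show that $\hat{S}=-S$, where $S$ is the strong Skolem starter from Theorem \ref{thm:New_Skolem_pq}, and then invoke the remark (after Theorem \ref{thm:SkolemAdrian}) that negation sends starters to starters. Both the ``strong'' and ``Skolem'' properties survive negation: for ``strong'' the sums are merely negated, and for ``Skolem'' a pair $\{x_i,y_i\}$ with $x_i<y_i$ and $y_i-x_i=i$ becomes $\{n-y_i,n-x_i\}$, which still has larger element minus smaller element equal to $i$. A cleaner variant is to note that each pair $\{a,2^{-1}a\}$ equals $\{b,2b\}$ with $b=2^{-1}a$, so $\hat{S}$ is cardioidal, and then apply Lemma \ref{lemma:Skolem} directly once $\hat{S}$ is known to be a starter; here $pq\equiv1\pmod{8}$ and $pq\not\equiv0\pmod{3}$ (the latter because the hypothesis $(p-1)\nmid(q-1)$ excludes $p=3$).

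To check the identification $\hat{S}=-S$ piece by piece, I would first handle the mixed pieces $pS_q$ and $qS_p$. Negating a pair $\{px,2px\}$ with $x\in QR(q)$ produces $\{p(-x),p(-2x)\}$; substituting $y=-2x$ rewrites this as $\{2^{-1}py,py\}$. Since $q\equiv3\pmod{8}$ gives $-1\in NQR(q)$ and $2\in NQR(q)$, we have $-2\in QR(q)$, so $y$ again ranges over $QR(q)$ and the pair sits in $p\hat{S}_q$. The identical argument, with the roles of $p$ and $q$ swapped, gives $-qS_p=q\hat{S}_p$.

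The subtler step, and the one I expect to be the main (small) obstacle, is the piece $S_{pq}$: negation reindexed by $y=-2x$ turns $\{x,2x\}$ into $\{2^{-1}y,y\}$, and to land inside $\hat{S}_{pq}$ I must show that $-2\in\langle r^2\rangle_{pq}$, so that the cosets $\langle r^2\rangle_{pq}$ and $\lambda\langle r^2\rangle_{pq}$ are each fixed by multiplication by $-2$. Using the CRT isomorphism $\Psi:G_{pq}\to G_p\times G_q$ exploited in Lemma \ref{lema:New_-1}, and the fact that $r$ is a primitive root of both $\mathbb{Z}_p^*$ and $\mathbb{Z}_q^*$, one checks that $-1$ and $2$ are both odd powers of $r$ in $\langle r\rangle_{pq}$ (since each is a non-quadratic residue modulo $p$ and modulo $q$), hence $-2$ is an even power, i.e. $-2\in\langle r^2\rangle_{pq}$. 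With this in hand, the same rewriting $y=-2x$ matches each half of $-S_{pq}$ with the corresponding half of $\hat{S}_{pq}$.

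Combining the three coset computations yields $\hat{S}=-S$, and by Theorem \ref{thm:New_Skolem_pq} together with the preservation of ``strong'' and ``Skolem'' under negation (or, equivalently, Lemma \ref{lemma:Skolem} applied to the cardioidal set $\hat{S}$), the corollary follows.
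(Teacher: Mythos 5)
Your overall route --- proving $\hat S=-S$ and then appealing to the fact that negation preserves the starter, strong, and Skolem properties (equivalently, observing that $\hat S$ is cardioidal and citing Lemma \ref{lemma:Skolem}) --- is exactly the paper's: the corollary is quoted from \cite{AvilaAKCE}, and immediately afterwards the paper justifies it with the same one-line remark that $\hat S=-S$ because $-1,2\in r\langle r^{2}\rangle_{pq}$, deferring that number-theoretic fact to the cited reference. Your reduction of the whole matter to $-2\in\langle r^2\rangle_{pq}$ (and $-2\in QR(p)\cap QR(q)$ for the mixed pieces) is the right skeleton.

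The one step that is thinner than it looks is the parenthetical ``since each is a non-quadratic residue modulo $p$ and modulo $q$'' offered as the reason that $2$ is an odd power of $r$ in $\langle r\rangle_{pq}$. Because $\langle r\rangle_{pq}$ is a \emph{proper} subgroup of $G_{pq}$, of index $\gcd(p-1,q-1)$, being a non-residue modulo both primes only tells you that $\Psi(2)=(r_p^{a},r_q^{b})$ with $a,b$ odd; to conclude $2=r^{m}\pmod{pq}$ with $m$ odd you additionally need the congruence system $m\equiv a\pmod{p-1}$, $m\equiv b\pmod{q-1}$ to be solvable, i.e. $a\equiv b\pmod{\gcd(p-1,q-1)}$. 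This is automatic when $\gcd(p-1,q-1)=2$, but not under the stated hypothesis $(p-1)\nmid(q-1)$ alone: for $p=19$, $q=43$, $r=3$ one has $2=3^{7}\pmod{19}$ and $2=3^{27}\pmod{43}$, while $\gcd(18,42)=6\nmid(27-7)$, so $2\notin\langle 3\rangle_{pq}$ at all. (The same issue already afflicts Theorem \ref{thm:New_Skolem_pq} and Lemma \ref{lema:New_2} as stated, where the hypothesis $(p-1)\nmid(q-1)$ is implicitly standing in for $\gcd(p-1,q-1)=2$, so the defect is inherited from the source rather than introduced by you.) If you add $\gcd(p-1,q-1)=2$ --- or carry out the CRT-compatibility argument of Lemma \ref{lema:New_2} in its $k=1$ form --- your proof is complete and coincides with the paper's.
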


It is not hard to verify that $\hat{S}=-S$, since  $-1,2\in r\langle r^{2}\rangle_{pq}$ (see \cite{AvilaAKCE}), where $r\in\mathbb{Z}_p^*$ is a primitive root of $\mathbb{Z}_p^*$ and $\mathbb{Z}_q^*$. Theorem \ref{thm:New_Skolem_pq} gives an alternative (simple) construction of cardioidal starter for $\mathbb{Z}_{pq}$, where $p$ and $q$ are odd prime numbers. This cardioidal starters can be obtained from \cite{Shalaby}.

We extend Theorem \ref{thm:New_Skolem_pq} and Corollary \ref{coro:New_Skolem} for some infinite odd primes of the form $p\equiv1$ (mod 8).

\begin{teo}\label{thm:New_main_pq}
Let $t_1$ and $t_2$ be two odd integers greater than 1 such that $t_1<t_2$, and let $k\geq3$ be a integer. Suppose that $p=2^kt_1+1$ and $q=2^kt_2+1$ are prime numbers such that $p<q$ and $(p-1)\not|(q-1)$, and $r\in\mathbb{Z}_p^*$ be a primitive root of $\mathbb{Z}_p^*$ and $\mathbb{Z}_q^*$. If $2\in r^{2^{k-1}}\langle r^{2^k}\rangle_{p}$ and $2\in r^{2^{k-1}}\langle r^{2^k}\rangle_{q}$, and
	\begin{eqnarray*}
		pT_{q}&=&\left\{\{px,2px\}:x\in r^{j}\langle r^{2^k}\rangle_{q}, j=0,\ldots,2^{k-1}-1\right\}\\
		qT_{p}&=&\left\{\{px,2px\}:x\in r^{j}\langle r^{2^k}\rangle_{p}, j=0,\ldots,2^{k-1}-1\right\}\\
		T_{pq}&=&\left\{\{x,2x\}:x\in r^{j}\langle r^{2^k}\rangle_{pq}, j=0,\ldots,2^{k-1}-1\right\}\\
		&\cup&\left\{\{\lambda x,2\lambda x\}:x\in r^{j}\langle r^{2^k}\rangle_{pq}, j=0,\ldots,2^{k-1}-1\right\}
	\end{eqnarray*} 
where $\lambda\in r^{2^{k-1}}\langle r^{2^k}\rangle_{pq}\setminus\{2\}$, then set $T=pT_{q}\cup qT_{p}\cup T_{pq}$ is a strong Skolem starter for $\mathbb{Z}_{pq}$.
\end{teo}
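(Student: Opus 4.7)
The plan is to imitate the argument of Theorem \ref{thm:New_Skolem_pq}, with quadratic residues replaced by the half-union of cyclotomic cosets $B_m := \bigcup_{j=0}^{2^{k-1}-1} r^j\langle r^{2^k}\rangle_m$ for $m\in\{p,q,pq\}$, and then to appeal to Lemma \ref{lemma:Skolem} since every pair of $T$ is of the cardioidal form $\{a,2a \bmod pq\}$. The starting point is the partition $\mathbb{Z}_{pq}^* = p\mathbb{Z}_q^* \cup q\mathbb{Z}_p^* \cup \underline{G}_{pq}$ recorded at the start of the section; I would handle each block in turn and show that the corresponding piece of $T$ accounts for the elements and the differences of that block.

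For $pT_q$ and $qT_p$, I would first record that because $(p-1)/2 = 2^{k-1}t_1$ and $(q-1)/2 = 2^{k-1}t_2$ with $t_1,t_2$ odd, the computation underlying Lemma \ref{lema:New_-1} localizes to give $-1\in r^{2^{k-1}}\langle r^{2^k}\rangle_p$ and $-1\in r^{2^{k-1}}\langle r^{2^k}\rangle_q$. Combined with the standing hypotheses $2\in r^{2^{k-1}}\langle r^{2^k}\rangle_p$ and $2\in r^{2^{k-1}}\langle r^{2^k}\rangle_q$, this produces the disjoint decompositions $\mathbb{Z}_p^* = B_p \sqcup 2B_p = B_p \sqcup (-B_p)$ and $\mathbb{Z}_q^* = B_q \sqcup 2B_q = B_q \sqcup (-B_q)$. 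Multiplying through by $q$ and $p$ respectively, the elements and the differences of $qT_p$ exactly exhaust $q\mathbb{Z}_p^*$, and those of $pT_q$ exactly exhaust $p\mathbb{Z}_q^*$.

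For $T_{pq}$, I would combine Lemmas \ref{lema:New_-1} and \ref{lema:New_2} to place both $-1$ and $2$ in the single coset $r^{2^{k-1}}\langle r^{2^k}\rangle_{pq}$. Any $\lambda$ chosen from that same coset and distinct from $2$ satisfies $\lambda/2 \in \langle r^{2^k}\rangle_{pq}$, which forces the four translates $B_{pq},\;2B_{pq},\;\lambda B_{pq},\;2\lambda B_{pq}$ to be pairwise disjoint. The crucial step is then to verify that these four translates cover $\underline{G}_{pq}$ as elements, and that $\{\pm x,\pm\lambda x : x\in B_{pq}\}$ covers the same set as differences; here one uses the identity $|\langle r\rangle_{pq}| = (p-1)(q-1)/2$ recorded at the start of the section (which rests on $(p-1)\nmid(q-1)$) together with the fact that $-1$ lies in the middle coset, exactly as in the proof of Theorem \ref{thm:New_Skolem_pq}.

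I expect the covering claim for $\underline{G}_{pq}$ to be the chief obstacle, because it requires careful bookkeeping of cosets of $\langle r^{2^k}\rangle_{pq}$ inside $\underline{G}_{pq}$ (not merely inside $\langle r\rangle_{pq}$), and it is the step in which the interaction between the cyclotomic structure mod $p$, mod $q$, and mod $pq$ is genuinely used. Once it is in place, the decomposition $T = pT_q \cup qT_p \cup T_{pq}$ is confirmed as a starter for $\mathbb{Z}_{pq}$, every one of its pairs is of cardioidal type, and Lemma \ref{lemma:Skolem} (applicable because $\gcd(pq,3)=1$ since $p,q>3$) immediately yields that $T$ is both strong and Skolem.
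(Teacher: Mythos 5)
Your overall strategy mirrors the paper's proof (the same partition $\mathbb{Z}_{pq}^*=p\mathbb{Z}_q^*\cup q\mathbb{Z}_p^*\cup\underline{G}_{pq}$, the same appeals to Lemmas \ref{lema:New_-1}, \ref{lema:New_2} and \ref{lemma:Skolem}), and your treatment of the blocks $p\mathbb{Z}_q^*$ and $q\mathbb{Z}_p^*$ is correct. But the step you yourself flag as ``the chief obstacle'' --- that the elements of $T_{pq}$ exhaust $\underline{G}_{pq}$ --- is a genuine gap, and the mechanism you propose for closing it fails. You assert that $\lambda/2\in\langle r^{2^k}\rangle_{pq}$ ``forces the four translates $B_{pq},2B_{pq},\lambda B_{pq},2\lambda B_{pq}$ to be pairwise disjoint.'' The implication runs the other way: $B_{pq}$ is a union of cosets of $\langle r^{2^k}\rangle_{pq}$, so multiplying it by the element $\lambda 2^{-1}\in\langle r^{2^k}\rangle_{pq}$ fixes it setwise, whence $\lambda B_{pq}=2B_{pq}$ and $2\lambda B_{pq}=4B_{pq}=B_{pq}$. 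The four translates collapse to two, and the pairs $\{\lambda x,2\lambda x\}$ cover exactly the same elements as the pairs $\{x,2x\}$.

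Worse, the gap cannot be filled for the construction as stated. All four translates are contained in $\langle r\rangle_{pq}$, since $2$, $\lambda$ and every $r^j$ lie there; but $|\langle r\rangle_{pq}|=\mathrm{lcm}(p-1,q-1)=2^k\mathrm{lcm}(t_1,t_2)$, while $|\underline{G}_{pq}|=(p-1)(q-1)=2^{2k}t_1t_2$, so $\langle r\rangle_{pq}$ has index $\gcd(p-1,q-1)=2^k\gcd(t_1,t_2)\geq 8$ in $G_{pq}$. (The identity $\mathrm{lcm}(p-1,q-1)=\frac{(p-1)(q-1)}{2}$ quoted at the start of Section \ref{sec:Zpq}, which you invoke, is false in this setting precisely because $2^k$ divides both $p-1$ and $q-1$.) Hence $T_{pq}$ misses at least $7/8$ of $\underline{G}_{pq}$ and $T$ is not a starter; one would need roughly $2^{k-1}\gcd(t_1,t_2)$ multipliers $\lambda$ drawn from distinct cosets of $\langle r\rangle_{pq}$, not a single one. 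Be aware that the paper's own proof does not close this step either --- it merely asserts the covering with ``by Lemma \ref{lema:New_-1}, it is easy to see'' --- so the defect lies in the construction itself, not only in your write-up.
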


\begin{proof}
	Let $r\in\mathbb{Z}_p^*$ be primitive roots of $\mathbb{Z}_p^*$ and $\mathbb{Z}_q^*$. We know that $p\mathbb{Z}_q^*$, $q\mathbb{Z}_p^*$ and $\underline{G}_{pq}$ forms a partition of $\mathbb{Z}_{pq}^*$. Let $qT_p$, $pT_q$ and $T_{pq}$ as above. By Lemma \ref{lema:New_-1}, it is easy to see that 
	\begin{eqnarray*}
		\{\pm px: x\in r^{j}\langle r^{2^k}\rangle_{q}, j=0,\ldots,2^{k-1}-1\}&=&p\mathbb{Z}_{q}^*\\
		\{\pm qx: x\in r^{j}\langle r^{2^k}\rangle_{p}, j=0,\ldots,2^{k-1}-1\}&=&q\mathbb{Z}_{p}^*\\
		\{\pm x: x\in r^{j}\langle r^{2^k}\rangle_{pq}, j=0,\ldots,2^{k-1}-1\}&\cup&\\
		\{\pm \lambda x: x\in r^{j}\langle r^{2^k}\rangle_{pq}, j=0,\ldots,2^{k-1}-1\}&=&\underline{G}_{pq}
	\end{eqnarray*} 
	Hence, the set $T=pT_{q}\cup qT_{p}\cup T_{pq}$ is a starter. Moreover, the starter $T$ is a cardioidal starter for $\mathbb{Z}_{pq}$. Hence, by Lemma \ref{lemma:Skolem} the starter $S$ is strong and Skolem.  
\end{proof}

\begin{coro}
Let $t_1$ and $t_2$ be two odd integers greater than 1 such that $t_1<t_2$, and let $k\geq3$ be a integer. Suppose that $p=2^kt_1+1$ and $q=2^kt_2+1$ are prime numbers such that $p<q$ and $(p-1)\not|(q-1)$, and $r\in\mathbb{Z}_p^*$ be a primitive root $\mathbb{Z}_p^*$ and $\mathbb{Z}_q^*$. If $2\in r^{2^{k-1}}\langle r^{2^k}\rangle_{p}$ and $2\in r^{2^{k-1}}\langle r^{2^k}\rangle_{q}$, and
	\begin{eqnarray*}
		p\hat{T}_{q}&=&\left\{\{px,2^{-1}px\}:x\in r^{j}\langle r^{2^k}\rangle_{q}, j=0,\ldots,2^{k-1}-1\right\}\\
		q\hat{T}_{p}&=&\left\{\{px,2^{-1}px\}:x\in r^{j}\langle r^{2^k}\rangle_{p}, j=0,\ldots,2^{k-1}-1\right\}\\
		\hat{T}_{pq}&=&\left\{\{x,2^{-1}x\}:x\in r^{j}\langle r^{2^k}\rangle_{pq}, j=0,\ldots,2^{k-1}-1\right\}\\
		&\cup&\left\{\{\lambda x,2^{-1}\lambda x\}:x\in r^{j}\langle r^{2^k}\rangle_{pq}, j=0,\ldots,2^{k-1}-1\right\}
	\end{eqnarray*} 
where $\lambda\in r^{2^{k-1}}\langle r^{2^k}\rangle_{pq}\setminus\{2\}$, then set $\hat{T}=p\hat{T}_{q}\cup q\hat{T}_{p}\cup\hat{T}_{pq}$ is a strong Skolem starter for $\mathbb{Z}_{pq}$.
\end{coro}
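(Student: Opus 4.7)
The plan is to reduce this corollary to Theorem~\ref{thm:New_main_pq} by establishing the identity $\hat{T}=-T$. Since the map $x\mapsto -x$ sends starters to starters and preserves the strong property (it only negates the sums $x_i+y_i$), and since every pair $\{-x,-2x\}$ of $-T$ can be rewritten as $\{2^{-1}y,y\}$ with $y=-2x$, the set $-T$ is a cardioidal strong starter whenever $T$ is one. A direct appeal to Lemma~\ref{lemma:Skolem} (noting that $pq$ is not divisible by $3$ because $p,q\equiv 1\pmod 8$) then gives the Skolem property of $\hat{T}$.

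The core verification is the pairwise identification of $-T$ with $\hat{T}$. By Lemma~\ref{lema:New_-1}, applied with $r$ a primitive root (so that in particular $r\in NQR(p)\cap NQR(q)$), we have $-1\in r^{2^{k-1}}\langle r^{2^k}\rangle_{m}$ for each $m\in\{p,q,pq\}$. By the corollary's hypothesis together with Lemma~\ref{lema:New_2}, the same coset contains $2$ for each such $m$. Multiplying the two inclusions yields $-2\in r^{2^{k}}\langle r^{2^k}\rangle_{m}=\langle r^{2^k}\rangle_{m}$, so multiplication by $-2$ preserves every coset $r^{j}\langle r^{2^k}\rangle_{m}$ for $j=0,\ldots,2^{k-1}-1$. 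Substituting $y=-2x$ in a pair $\{x,2x\}$ produces $\{2^{-1}y,y\}$ with $y$ ranging over exactly the same coset as $x$; applied to each of $pT_q$, $qT_p$ and the first block of $T_{pq}$, this reproduces the corresponding block of $\hat{T}$. The same substitution in the $\lambda$-block turns $\{\lambda x,2\lambda x\}$ into $\{2^{-1}\lambda y,\lambda y\}$ with $y$ again ranging over $r^{j}\langle r^{2^k}\rangle_{pq}$, matching the $\lambda$-block of $\hat{T}_{pq}$.

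The only real obstacle is the coset calculation placing $-2$ in the trivial coset $\langle r^{2^k}\rangle_m$ at each of the three moduli simultaneously; this is precisely where Lemmas~\ref{lema:New_-1} and~\ref{lema:New_2} combine to do the heavy lifting, and where the hypotheses $2\in r^{2^{k-1}}\langle r^{2^k}\rangle_p$ and $2\in r^{2^{k-1}}\langle r^{2^k}\rangle_q$ are used in an essential way. Once $\hat{T}=-T$ has been established, the starter, strong, cardioidal and Skolem properties of $\hat{T}$ all descend from those of $T$ proved in Theorem~\ref{thm:New_main_pq}, and the corollary follows.
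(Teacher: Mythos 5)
Your proposal is correct and follows the same route the paper indicates: the paper gives no separate proof of this corollary but justifies it in the remark immediately after, via the identity $\hat{T}=-T$ together with the fact that $-1,2\in r^{2^{k-1}}\langle r^{2^k}\rangle_{pq}$ (so $-2$ lies in the trivial coset and negation permutes each block), exactly as you argue. Your write-up is in fact more complete than the paper's, since you make explicit the coset computation at all three moduli and the appeal to Lemma~\ref{lemma:Skolem}.
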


It is not hard to verify that $\hat{T}=-T$, since  $-1,2\in r^{2^{k-1}}\langle r^{2^k}\rangle_{pq}$, where $r\in\mathbb{Z}_p^*$ is a primitive root of $\mathbb{Z}_p^*$ and $\mathbb{Z}_q^*$. Theorem \ref{thm:New_Skolem_pq} gives an alternative (simple) construction of cardioidal starter for $\mathbb{Z}_{pq}$, for infinitely many odd primes $p,q\equiv1$ (mod 8) and $n\geq1$ an integer number. This cardioidal starters can be obtained from \cite{Shalaby}.
%%%%%%%%%%%%%%%%%%%%%%%%%%%%%%%%%%%%%%%%%%%%%%%%%%%%
%%%%%%%%%%%%%%%%%%%%%%%%%%%%%%%%%%%%%%%%%%%%%%%%%%%
%%%%%%%%%%%%%%%%%%%%%%%%%%%%%%%%%%%%%%%%%%%%%%%%%%%%%
\subsection*{Conclusion}
In this note we present an alternative (simple) construction of cardioidal starters (strong and Skolem) for $\mathbb{Z}_{q^n}$, where $q\equiv3$ (mod 8) is an odd prime number and $n\geq1$ is an integer number. Also, we present an alternative construction of cardioidal starters (strong and Skolem) for $\mathbb{Z}_p$, $\mathbb{Z}_{pq}$ and $\mathbb{Z}_{p^n}$, for infinitely many odd primes $p,q\equiv1$ (mod 8) and $n\geq1$ an integer number. This cardioidal starters can be obtained by a more general result from \cite{Shalaby}. This work is inspire of the work given Ogandzhanyants et al. in \cite{Shalaby}.

{\bf Acknowledgment}

Research was partially supported by SNI, México and CONACyT, México.

\bibliographystyle{amsplain}

\end{document}